\newtheorem{thm}{Theorem}
\theoremstyle{plain}
\newtheorem{prop}[thm]{Proposition}
\theoremstyle{plain}
\theoremstyle{remark}
\newtheorem*{rem*}{Remark}
\theoremstyle{plain}
\newtheorem{lem}[thm]{Lemma}
\theoremstyle{definition}
\theoremstyle{definition}
\theoremstyle{plain}
\newtheorem*{conjecture*}{Conjecture}
\let\Re\relax
\DeclareMathOperator{\Re}{Re}
\begin{document}

\title{The Perron non-backtracking eigenvalue after node addition}

\author{
Leo Torres \\
Max Planck Institute for Mathematics in the Sciences \\
\url{leo@leotrs.com}
}

\maketitle

\begin{abstract}{
Consider a finite undirected unweighted graph $G$ and add a new node to it arbitrarily connecting it to pre-existing nodes.
We study the behavior of the Perron eigenvalue of the non-backtracking matrix of $G$ before and after such a node addition.
We prove an interlacing-type result for said eigenvalue, namely, the Perron eigenvalue never decreases after node addition.
Furthermore, our methods lead to bounds for the difference between the eigenvalue before and after node addition.
These are the first known bounds that have been established in full rigor.
Our results depend on the assumption of diagonalizability of the non-backtracking matrix.
Practical experience says that this assumption is fairly mild in many families of graphs, though necessary and sufficient conditions for it remain an open question.

\textbf{Keywords:} spectral graph theory, non-backtracking, interlacing

\textbf{MSC:} 05C50, 05C82, 15A18, 15B99
}
\end{abstract}

\section{Introduction}\label{sec:intro}

Given a finite, undirected, unweighted graph $G=(V,E)$, let $\bar{E}$ be the set of oriented edges.
That is, if there exists an edge in $E$ joining nodes $u,v \in V$, then both orientations $u \to v$ and $v \to u$ are members of $\bar{E}$.
A \emph{walk} in $G$ is a finite sequence of oriented edges that are consecutively adjacent.
A \emph{backtrack} is a walk of length two of the type $u \to v \to u$.
A \emph{non-backtracking walk} is a walk such that none of its sub-walks are backtracks.
The \emph{non-backtracking matrix} of $G$ is the (unnormalized) transition matrix of a random walker that only follows non-backtracking walks.
In the rest of the paper, we use the initials ``NB'' to mean ``non-backtracking'' and we say ``NB eigenvalues'' to mean ``the eigenvalues of the NB matrix''.

The NB matrix and its eigenvalues have received much attention recently.
In many applications to network science such as spectral clustering \cite{krzakala,bordenave2015}, centrality \cite{newman,torres2020}, and dynamics \cite{karrer,shrestha,pastor}, the NB eigenvalues improve upon what can be achieved with the spectrum of other matrices associated to the graph such as the adjacency or the Laplacian.
However, in contrast to those other matrices, the NB matrix is not normal and thus not symmetric.
Therefore, most of the standard tools in spectral graph theory do not apply to it.

One such tool that is absent in the NB case is a theory of interlacing \cite{godsil}.
In this paper, we establish an interlacing-type result for one of the NB eigenvalues.
Perron-Frobenius theory guarantees that the NB matrix has a real, positive, simple eigenvalue equal to its spectral radius.
We refer to it as the Perron eigenvalue.
Our main result establishes that the Perron eigenvalue behaves as expected when a node is added to or removed from the graph: it never decreases when a node is added, and it never increases when a node is removed.
Concretely, suppose $\lambda_1$ is the Perron eigenvalue before node addition and $\lambda_c$ is the Perron eigenvalue after node addition. 
We show $\lambda_c \geq \lambda_1$, with equality if and only if the degree of the added node is $1$ and, furthermore, we provide some bounds for the difference $\lambda_c - \lambda_1$.
It should be noted that establishing $\lambda_c \geq \lambda_1$ can be done with other, more elementary, methods than the one presented here (see e.g. Example 7.10.2 in \cite{meyer}).
However, our method is the only known method that leads to reasonable bounds.

The present work is a direct continuation of the work in \cite{torres2020}. Therein, the authors use heuristics to bound the difference $\lambda_{c}-\lambda_1$, and develop algorithms exploiting these heuristics to maximize the difference in an applied setting.
Here we improve upon those heuristics and establish them in full rigor.
Other authors \cite{zhang,zhang2} have studied the analogous case when an edge, rather than a node, is added to or removed from the graph.

Crucially, our derivation rests on the assumption that the NB matrix is diagonalizable.
Virtually every graph observed during this and previous studies has a diagonalizable NB matrix.
Thus the assumption of diagonalizability seems to be reasonably mild.
However, characterizing the conditions under which this assumption is true remains a work in progress \cite{torresarxiv}.

The rest of this paper is structured as follows.
In Section \ref{sec:background} we define the necessary terms and background notions.
In Section \ref{sec:diagonal} we introduce the assumption of diagonalizability and discuss some of its consequences.
In particular, Equation \eqref{eqn:assumption} presents a novel property of the eigendecomposition of the NB matrix that is of independent interest. 
In Section \ref{sec:addnode} we describe the setting of node addition and recall some of the results already published in \cite{torres2020}.
In Section \ref{sec:interlacing} we prove our main result of interlacing of the Perron eigenvalue and in Section \ref{sec:bounds} we introduce the advertised bounds.
We close with our conclusions in Section \ref{sec:conclusion}.

\vspace{-1em}
\section{Background}\label{sec:background}

All graphs considered are finite, undirected, unweighted, and connected.
The symbol $\| \cdot \|_p$ denotes the induced $p$-norm of a matrix.
In this work, we use ``NB'' to mean ``non-backtracking''.

\paragraph*{NB walks and NB matrix}
Consider a graph $G$ with $n$ nodes and $m$ edges.
Let $\bar{E} $ be the set of \emph{oriented} edges.
That is, if nodes $u$ and $v$ are neighbors in $G$, we let both $u \to v$ and $v \to u$ be members of $\bar{E}$.
A \emph{walk} in $G$ is a sequence of oriented edges that are consecutively adjacent.
The \emph{length} of a walk is the number of oriented edges in it.
A walk of length two of the type $u \to v \to u$ is called a \emph{backtrack}.
A \emph{non-backtracking} walk, or NB walk, is a walk such that none of its length-$2$ sub-walks comprise a backtrack.
The \emph{NB matrix} of $G$ is denoted by $\mathbf{B}$ and can be understood as the (unnormalized) transition matrix of a random walker that does not trace backtracks.
Formally, it is indexed in the rows and columns by $\bar{E}$, and it is defined as
\begin{equation}\label{eqn:def-b}
\mathbf{B}_{k \to l, i \to j} = \delta_{jk} \left( 1 - \delta_{il} \right),
\end{equation}
where $\delta$ is the Kronecker delta.
In words, $\mathbf{B}_{k \to l, i \to j}$ is one whenever $k$ equals $j$ and the walk $i \to j = k \to l$ is a valid walk of length 2 that is not a backtrack, and it is zero otherwise.
Importantly, the quantity $\mathbf{B}^r_{k \to l, i \to j}$ is equal to the number of NB walks starting with $i \to j$ and ending with $k \to l$ of length $r + 1$.

\paragraph*{NB eigenvalues} 
The NB matrix $\mathbf{B}$ is not normal (and thus not symmetric); its eigenvalues are in general complex numbers.
Therefore, most of the standard tools from spectral graph theory do not apply to it.
An important fact about NB eigenvalues is the \emph{Ihara determinant formula} \cite{friedman,kotani}.
Let $\mathbf{A} = \left( a_{ij} \right)$ be the adjacency matrix of $G$, and let $\mathbf{D}$ the diagonal degree matrix.
Then, the Ihara determinant formula states
\begin{equation}\label{eqn:ihara}
\det \left( I - t \mathbf{B} \right) = \left( 1 - t^2 \right)^{m - n} \det \left( I - t \mathbf{A} + t^2 \left( \mathbf{D} - \mathbf{I} \right) \right).
\end{equation}
Note the left-hand side of Equation \eqref{eqn:ihara} is the characteristic polynomial of $\mathbf{B}$ evaluated at $1/t$.

Important in our later exposition are the eigenvalues $\pm1$.
Equation \eqref{eqn:ihara} implies that $\pm 1$ are always eigenvalues of $\mathbf{B}$, each with algebraic multiplicity at least $m - n$.
Analyzing the equation $\mathbf{B} \mathbf{v} = \pm \mathbf{v}$ using the definition \eqref{eqn:def-b} reveals
\begin{equation}\label{eqn:pm1}
\small
\mathbf{B} \mathbf{v} = \mathbf{v} \iff
\begin{cases}
\sum_i a_{ij} \mathbf{v}_{i \to j} & \text{for each node $j$,} \\
\mathbf{v}_{k \to l} + \mathbf{v}_{l \to k} = 0 & \text{for each edge $k - l$.}
\end{cases}
\quad\quad
\mathbf{B} \mathbf{v} = - \mathbf{v} \iff
\begin{cases}
\sum_i a_{ij} \mathbf{v}_{i \to j} & \text{for each node $j$,} \\
\mathbf{v}_{k \to l} = \mathbf{v}_{l \to k} & \text{for each edge $k - l$.}
\end{cases}
\end{equation}

\section{Diagonalizability}\label{sec:diagonal}
In general, neither sufficient or necessary conditions for the diagonalizability of $\mathbf{B}$ are known. However, the assumption of diagonalizability is crucial in our later exposition. We remark here that the vast majority of graphs generated with certain random graph ensembles (e.g Erd\H{o}s-R\'enyi, Barab\'asi-Albert, Stochastic Block Model) seem to have a diagonalizable NB matrix when the number of nodes is large.

To discuss the diagonalizability of $\mathbf{B}$, consider the orientation reversal operator $\mathbf{P}$, defined by
\begin{equation}\label{eqn:def-p}
\left( \mathbf{P} \mathbf{v} \right)_{k \to l} = \mathbf{v}_{l \to k},
\end{equation}
It is direct from the definition that $\mathbf{P}$ is involutory ($\mathbf{P}^2 = \mathbf{I}$), orthogonal ($\mathbf{P P}^{\top} = \mathbf{I}$), and symmetric ($\mathbf{P} = \mathbf{P}^{\top}$).
This operator reveals the structure of the NB matrix $\mathbf{B}$ in the following sense.

\begin{lem}[see \cite{bordenave2015,torres2020}]\label{lem:left-right}
Let $G$ be a graph with NB matrix $\mathbf{B}$ and let $\mathbf{P}$ be the orientation reversal operator defined in Equation \eqref{eqn:def-p}. Then, we have
\vspace{-1em}
\begin{enumerate}
\item $\mathbf{B}$ is $\mathbf{P}$-symmetric, that is $\mathbf{B}^{\top} = \mathbf{P B P}$.
\item The right and left eigenvectors of $\mathbf{B}$ are related via $\mathbf{P}$, that is $\mathbf{B v} = \lambda \mathbf{v} \iff \mathbf{v}^{\top} \mathbf{P B} = \lambda \mathbf{v}^{\top} \mathbf{P}$.
\item Any two right eigenvectors $\mathbf{B v} = \lambda \mathbf{v}$ and $\mathbf{B u} = \mu \mathbf{u}$ such that $\mu \neq \lambda$ are $\mathbf{P}$-orthogonal, that is $\mathbf{v}^{\top} \mathbf{P u} = 0$.
\end{enumerate}
\end{lem}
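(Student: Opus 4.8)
The plan is to prove the three items of Lemma~\ref{lem:left-right} in order, since each one feeds into the next. The whole argument rests on the single structural identity $\mathbf{B}^\top = \mathbf{P}\mathbf{B}\mathbf{P}$, so I would establish that first by a direct entrywise computation from the definitions \eqref{eqn:def-b} and \eqref{eqn:def-p}. Concretely, $(\mathbf{P}\mathbf{B}\mathbf{P})_{k\to l,\, i\to j} = (\mathbf{B}\mathbf{P})_{l\to k,\, i\to j} = \mathbf{B}_{l\to k,\, j\to i} = \delta_{ik}(1-\delta_{jl})$, using that the inner $\mathbf{P}$ reverses the column index $i\to j$ to $j\to i$ and the outer $\mathbf{P}$ reverses the row index $k\to l$ to $l\to k$. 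On the other hand, $(\mathbf{B}^\top)_{k\to l,\, i\to j} = \mathbf{B}_{i\to j,\, k\to l} = \delta_{li}(1-\delta_{kj})$. These two expressions agree: both are $1$ exactly when $i=k$ and $j\neq l$. This gives item~1. I expect this to be the only place where one actually touches the combinatorial definition of $\mathbf{B}$; everything else is linear algebra.

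For item~2, I would argue purely formally from item~1. Suppose $\mathbf{B}\mathbf{v} = \lambda\mathbf{v}$. Transpose to get $\mathbf{v}^\top \mathbf{B}^\top = \lambda \mathbf{v}^\top$, then substitute $\mathbf{B}^\top = \mathbf{P}\mathbf{B}\mathbf{P}$ to obtain $\mathbf{v}^\top \mathbf{P}\mathbf{B}\mathbf{P} = \lambda \mathbf{v}^\top$, and right-multiply by $\mathbf{P}$, using $\mathbf{P}^2 = \mathbf{I}$, to get $\mathbf{v}^\top \mathbf{P}\mathbf{B} = \lambda \mathbf{v}^\top \mathbf{P}$, which is the claimed left-eigenvector relation. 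The converse is identical reading the chain of equalities backwards (again using $\mathbf{P}^2=\mathbf{I}$ and $\mathbf{P}=\mathbf{P}^\top$), so the ``$\iff$'' holds. In other words $\mathbf{P}\mathbf{v}$ (as a row vector $\mathbf{v}^\top\mathbf{P}$, since $\mathbf{P}$ is symmetric) is a left eigenvector of $\mathbf{B}$ for the same eigenvalue whenever $\mathbf{v}$ is a right eigenvector.

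For item~3, let $\mathbf{B}\mathbf{v} = \lambda\mathbf{v}$ and $\mathbf{B}\mathbf{u} = \mu\mathbf{u}$ with $\lambda\neq\mu$. By item~2, $\mathbf{v}^\top\mathbf{P}$ is a left eigenvector of $\mathbf{B}$ with eigenvalue $\lambda$, so $\mathbf{v}^\top\mathbf{P}\mathbf{B} = \lambda\mathbf{v}^\top\mathbf{P}$. Apply both sides to $\mathbf{u}$: on one hand $\mathbf{v}^\top\mathbf{P}\mathbf{B}\mathbf{u} = \mathbf{v}^\top\mathbf{P}(\mu\mathbf{u}) = \mu\,\mathbf{v}^\top\mathbf{P}\mathbf{u}$, and on the other hand $\mathbf{v}^\top\mathbf{P}\mathbf{B}\mathbf{u} = (\lambda\mathbf{v}^\top\mathbf{P})\mathbf{u} = \lambda\,\mathbf{v}^\top\mathbf{P}\mathbf{u}$. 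Hence $(\lambda-\mu)\,\mathbf{v}^\top\mathbf{P}\mathbf{u} = 0$, and since $\lambda\neq\mu$ we conclude $\mathbf{v}^\top\mathbf{P}\mathbf{u} = 0$, which is exactly $\mathbf{P}$-orthogonality.

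The only genuine obstacle is the bookkeeping in item~1: one has to be careful about which index the inner versus the outer $\mathbf{P}$ acts on, and to track the two Kronecker deltas correctly so that the ``not a backtrack'' condition $1-\delta_{il}$ lands in the right place. Once the identity $\mathbf{B}^\top=\mathbf{P}\mathbf{B}\mathbf{P}$ is in hand, items~2 and~3 are short formal manipulations using only $\mathbf{P}^2=\mathbf{I}$ and $\mathbf{P}=\mathbf{P}^\top$, with no further reference to the graph structure. I would also remark that item~3 is the non-backtracking analogue of the familiar fact that eigenvectors of a symmetric matrix for distinct eigenvalues are orthogonal, with the standard inner product replaced by the (indefinite) bilinear form $\langle \mathbf{x},\mathbf{y}\rangle_{\mathbf{P}} := \mathbf{x}^\top\mathbf{P}\mathbf{y}$.
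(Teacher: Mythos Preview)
Your approach is essentially the same as the paper's (which is very terse: item~1 is ``direct from the definitions'', item~1 implies item~2, and item~3 follows from item~2 together with the standard fact that left and right eigenvectors for distinct eigenvalues are orthogonal). You have simply written out the details the paper omits, and items~2 and~3 are handled correctly.

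There is, however, an index slip in your computation for item~1. You correctly get $(\mathbf{B}^\top)_{k\to l,\,i\to j}=\mathbf{B}_{i\to j,\,k\to l}=\delta_{li}(1-\delta_{kj})$, which is $1$ exactly when $i=l$ and $j\neq k$. But your evaluation of $\mathbf{B}_{l\to k,\,j\to i}$ is off: applying the definition $\mathbf{B}_{\alpha\to\beta,\,\gamma\to\delta}=\delta_{\delta\alpha}(1-\delta_{\gamma\beta})$ with $(\alpha,\beta,\gamma,\delta)=(l,k,j,i)$ gives $\delta_{il}(1-\delta_{jk})$, not $\delta_{ik}(1-\delta_{jl})$. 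With this correction the two expressions for $(\mathbf{P}\mathbf{B}\mathbf{P})_{k\to l,\,i\to j}$ and $(\mathbf{B}^\top)_{k\to l,\,i\to j}$ genuinely coincide (both equal $1$ precisely when $i=l$ and $j\neq k$), and the rest of your argument goes through unchanged. Your own concluding sentence ``both are $1$ exactly when $i=k$ and $j\neq l$'' already contradicts the formula you wrote for $(\mathbf{B}^\top)$, which should have flagged the slip.
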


\begin{proof}
The first statement is direct from Equations \eqref{eqn:def-b} and \eqref{eqn:def-p}, and implies the second statement.
The third statement comes from the second statement and the fact that, for any matrix, any left eigenvector is orthogonal to a right eigenvector of different eigenvalue.
\end{proof}

Now assume $\mathbf{B}$ is diagonalizable, that is, there exists an invertible matrix $\mathbf{R}$ such that $\mathbf{B R} = \mathbf{R \Lambda}$, where $\mathbf{\Lambda}$ is a diagonal matrix containing the eigenvalues of $\mathbf{B}$.
Define $\mathbf{L} = \mathbf{R}^{-1}$ and note the rows of $\mathbf{L}$ form a basis of left eigenvectors of $\mathbf{B}$.
In light of statement 2 of Lemma \ref{lem:left-right}, the rows of the matrix $\mathbf{R}^{\top} \mathbf{P}$ also form a basis of left eigenvectors.
Thus we ask whether $\mathbf{R}$ can be chosen so that
\begin{equation}\label{eqn:assumption}
\mathbf{L} = \mathbf{R}^{\top} \mathbf{P}, \text{ or, equivalently, } \mathbf{I} = \mathbf{R}^{\top} \mathbf{P R}.
\end{equation}
In later sections, we assume both that $\mathbf{B}$ is diagonalizable as well as the stricter relation of Equation \eqref{eqn:assumption}, though
fully characterizing either condition remains an open question.
In the rest of this section, we make some progress in characterizing assumption \eqref{eqn:assumption}.

\begin{rem*}
Equation \eqref{eqn:assumption} as well as the second statement in Lemma \ref{lem:left-right} feature the \textit{transpose} of the right eigenvectors of $\mathbf{B}$, and not the conjugate transpose, even if the corresponding eigenvalue is complex.
This is not a mistake.
Indeed, assumption \eqref{eqn:assumption} is equivalent to $\mathbf{R R^{\top}} = \mathbf{P}$.
The more natural relation $\mathbf{R R^*} = \mathbf{P}$ is impossible as it would imply that $\mathbf{P}$ is positive definite, which is false as $\mathbf{P}$ always has eigenvalues $\pm1$. 
Assumption \eqref{eqn:assumption} leads to no such contradiction and is in fact frequently true, as explained below.
\end{rem*}

For any two complex vectors $\mathbf{x},\mathbf{y}$, define the bilinear form $\langle \mathbf{x}, \mathbf{y} \rangle = \mathbf{x^{\top}} \mathbf{P} \mathbf{y}$.
Assuming $\mathbf{B}$ is diagonalizable, 
$\mathbf{R^{\top} P R}$ is the matrix of the bilinear form $\langle \cdot, \cdot \rangle$ in the basis $\mathbf{R}$ and statement 3 of Lemma \ref{lem:left-right} implies it has block-diagonal form.
Moreover, assumption \eqref{eqn:assumption} holds if and only if each eigenspace of $\mathbf{B}$ admits a basis that is orthonormal with respect to $\langle \cdot, \cdot \rangle$.
Two special eigenvalues always admit such a basis.

\begin{lem}\label{lem:orthonormal-bases}
The eigenvalues $\pm1$ of the NB matrix $\mathbf{B}$ admit a basis that is orthonormal with respect to $\langle \cdot, \cdot \rangle$.
\end{lem}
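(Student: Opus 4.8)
The plan is to read off the $\pm 1$ eigenspaces explicitly from Equation \eqref{eqn:pm1}, note that on each of them the operator $\mathbf{P}$ acts as a scalar, and then build the required basis by ordinary Gram--Schmidt, correcting the normalization by a scalar of modulus one where needed. Concretely, Equation \eqref{eqn:pm1} does more than characterize the $\pm1$ eigenvectors: it determines the action of $\mathbf{P}$ on each eigenspace. If $\mathbf{B}\mathbf{v} = \mathbf{v}$ then $\mathbf{v}_{k\to l} + \mathbf{v}_{l\to k} = 0$ for every edge, i.e. $\mathbf{P}\mathbf{v} = -\mathbf{v}$; and if $\mathbf{B}\mathbf{v} = -\mathbf{v}$ then $\mathbf{v}_{k\to l} = \mathbf{v}_{l\to k}$, i.e. $\mathbf{P}\mathbf{v} = \mathbf{v}$. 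Hence $\mathbf{P} = -\mathbf{I}$ on the $1$-eigenspace of $\mathbf{B}$ and $\mathbf{P} = \mathbf{I}$ on the $(-1)$-eigenspace, so that $\langle \mathbf{x},\mathbf{y}\rangle = \mathbf{x}^{\top}\mathbf{P}\mathbf{y}$ equals $\mathbf{x}^{\top}\mathbf{y}$ on the first eigenspace and $-\mathbf{x}^{\top}\mathbf{y}$ on the second (the complex-bilinear extension of the standard dot product in both cases).

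Next I would use that $\mathbf{B}$ is a real matrix with $\pm1$ real eigenvalues, so each of these eigenspaces is the complexification of a real subspace and therefore has a basis of real vectors. For the $(-1)$-eigenspace, take such a real basis and apply Gram--Schmidt for the ordinary dot product to get real vectors $\mathbf{u}_1,\dots,\mathbf{u}_s$ with $\mathbf{u}_j^{\top}\mathbf{u}_k = \delta_{jk}$; then $\langle \mathbf{u}_j,\mathbf{u}_k\rangle = \mathbf{u}_j^{\top}\mathbf{P}\mathbf{u}_k = \mathbf{u}_j^{\top}\mathbf{u}_k = \delta_{jk}$, so this basis is already $\langle\cdot,\cdot\rangle$-orthonormal. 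For the $1$-eigenspace, produce analogously real $\mathbf{v}_1,\dots,\mathbf{v}_t$ with $\mathbf{v}_j^{\top}\mathbf{v}_k = \delta_{jk}$; here $\langle \mathbf{v}_j,\mathbf{v}_k\rangle = -\delta_{jk}$, so I would instead set $\mathbf{e}_k = i\,\mathbf{v}_k$, which gives $\langle \mathbf{e}_j,\mathbf{e}_k\rangle = i^2(-\delta_{jk}) = \delta_{jk}$, as desired. (The two eigenspaces can be handled independently because they are $\langle\cdot,\cdot\rangle$-orthogonal already by statement 3 of Lemma \ref{lem:left-right}.)

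The computations are routine; the only point that needs care is that $\langle\cdot,\cdot\rangle$ is \emph{negative} definite on the $1$-eigenspace, so a real orthonormal basis there has Gram matrix $-\mathbf{I}$ rather than $\mathbf{I}$, and one must rescale by $i$ to repair it. This is exactly where the transpose-not-conjugate-transpose convention flagged in the Remark is essential: multiplying a vector by $i$ scales $\langle\mathbf{v},\mathbf{v}\rangle$ by $i^2 = -1$ rather than by $|i|^2 = 1$, so there is no obstruction to turning $-1$ into $+1$ on the diagonal. I do not expect any deeper difficulty; degenerate cases (either eigenspace being trivial, which happens when $m = n$) are vacuous.
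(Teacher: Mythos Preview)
Your proof is correct and follows essentially the same argument as the paper: use Equation \eqref{eqn:pm1} to see that $\mathbf{P}$ restricts to $\pm\mathbf{I}$ on the $\mp1$-eigenspaces, take a standard orthonormal (real) basis of each, and rescale by $i$ on the $+1$-eigenspace. (There is a harmless slip in your first paragraph where ``first'' and ``second'' are interchanged---on the $1$-eigenspace $\langle\mathbf{x},\mathbf{y}\rangle = -\mathbf{x}^{\top}\mathbf{y}$, not $\mathbf{x}^{\top}\mathbf{y}$---but your detailed computations that follow are correct.)
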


\begin{proof}
Suppose $\mathbf{B v} = - \mathbf{v}$.
Equations \eqref{eqn:pm1} imply $\mathbf{P v} = \mathbf{v}$.
Therefore, the restriction of $\mathbf{P}$ to the eigenspace of eigenvalue $-1$ is equal to the identity.
Thus, any orthonormal basis (in the standard sense) of the eigenspace of $-1$ is also orthonormal with respect to $\langle \cdot, \cdot \rangle$.
Similarly, if $\mathbf{B v} = \mathbf{v}$ then Equations \eqref{eqn:pm1} imply $\mathbf{P v} = -\mathbf{v}$.
Suppose the vectors $\{\mathbf{x}_j\}$ are an orthonormal basis (in the standard sense) of the eigenspace of $+1$.
Then $\{i \mathbf{x}_j\}$ are an orthonormal basis with respect to $\langle \cdot, \cdot \rangle$, where $i^2 = -1$.
\end{proof}

In view of this result, if every eigenvalue other than $\pm1$ is simple then assumption \eqref{eqn:assumption} holds. 
We remark that virtually every graph observed in the course of this and previous studies satisfies this condition.

\section{Adding a new node}\label{sec:addnode}

From now and for the rest of the manuscript, we fix a finite, unweighted, undirected, connected graph $G$ with $n$ nodes and $m$ edges and assume that $G$ has minimum degree at least $2$.
Let $\mathbf{B}$ be the NB matrix of $G$. Perron-Frobenius theory guarantees that $\mathbf{B}$ has a simple eigenvalue equal to its spectral radius. We refer to this eigenvalue as the Perron eigenvalue and denote it by $\lambda_1$.
Construct a new graph $G^{c}$ by adding a new node $c$ of degree $d$ to $G$.
Let $\mathbf{B}^c$ be the NB matrix of $G^c$ and suppose $\lambda_c$ is its Perron eigenvalue.
The main purpose of the present work is to bound the difference $| \lambda_c - \lambda_1|$. 
This setting was already studied in \cite{torres2020} in the applied context of node immunization strategies.
In the present work we improve upon some of the results discussed there as well as establish some more theoretical facts in full rigor.

We now quickly recall some of the necessary results from \cite{torres2020}. Importantly, \cite{torres2020} discusses the case of node removal, while in the present work we focus on node addition. Every result of in \cite{torres2020} is also applicable in the case of node addition, and every result proved herein is also applicable in the case of node removal.

Consider $\mathbf{B}$ and $\mathbf{B}^c$. Note $\mathbf{B}$ is a square matrix of size $2m$ and $\mathbf{B}^{c}$ is a square matrix of size $2m+2d$.
We can write $\mathbf{B}^{c}$ in block form as shown in the bottom right of Figure \ref{fig:add-node}, where $\mathbf{B}$ is the NB matrix of the original graph, and $\mathbf{F}$ is indexed in the rows and columns by the yellow edges.
Accordingly, $\mathbf{D}$ is indexed in the rows by blue edges and in the columns by yellow edges, and vice versa for $\mathbf{E}$.
Note that all of $\mathbf{B},\mathbf{D},\mathbf{E},\mathbf{F}$ are sub-matrices of $\mathbf{B}^{c}$ and thus we know their general element is given by Equation (\ref{eqn:def-b}).

\begin{figure}
\begin{centering}
\includegraphics[scale=0.75]{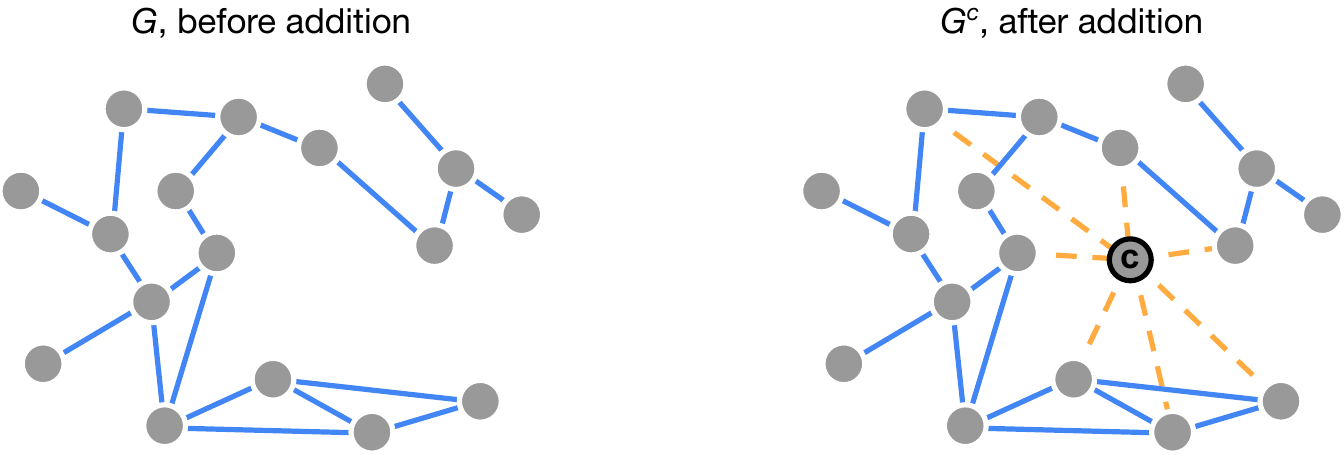}
\par\end{centering}
\begin{flalign*}
\quad\quad\quad\quad\quad
\left(
\begin{array}{ccc}  &   & \\  & \mathbf{B} & \\  &   & \\   \end{array} \right)
\quad\quad\quad\quad\quad\quad\quad\quad\quad
\mathbf{B}^c= \left(
\begin{array}{c;{2pt/2pt}c} \begin{array}{ccc}  &   & \\
& \mathbf{B} & \\  &   & \\   \end{array}  & \mathbf{D} \\  \hdashline[2pt/2pt] \mathbf{E} & \mathbf{F} \\ \end{array}
\right)
\quad\quad\quad
\end{flalign*}

\caption{\textbf{\label{fig:add-node}Top:} Construction of $G^{c}$ from $G$.
All edges incident to $c$ are in dashed yellow lines. 
\textbf{Bottom:} corresponding NB-matrices.
Adapted with permission from \cite{torres2020}.}
\end{figure}

In \cite{torres2020} it was established that $\mathbf{F}^{2}=0$ and $\mathbf{D}\mathbf{E}=0$.
Now define $\mathbf{X} \coloneqq \mathbf{D}\mathbf{F}\mathbf{E}$ and note 
\begin{equation}\label{eqn:def-x}
\mathbf{X}_{k\to l,i\to j} = a_{ck}a_{cj}(1 - \delta_{jk}),
\end{equation}
that is, $\mathbf{X}$ is a binary matrix.
Following the top right panel of Figure \ref{fig:add-node}, $\mathbf{X}$ keeps track of NB walks that consist of four edges of colors blue-yellow-yellow-blue.
These are precisely those paths formed by the addition of the new node $c$ and thus $\mathbf{X}$ will be essential to our discussion.

In \cite{torres2020}, it was also shown, using Schur complements, that the characteristic polynomial of $\mathbf{B}^c$ satisfies
\begin{equation}\label{eqn:schur}
\det\left(\mathbf{B}^{c} - t\mathbf{I}\right) = t^{2d}\det\left(\mathbf{B} - t\mathbf{I} + \frac{\mathbf{X}}{t^{2}}\right),
\end{equation}
for each $t \neq 0$.
Note that $\mathbf{X}$ is the zero matrix when the degree of the added node is $d=1$.
In this case, Equation \eqref{eqn:schur} shows that adding a node of degree one does not affect the non-zero part of the spectrum.
Thus in the following we assume $d \geq 2$.
Now define the \emph{resolvent} of $\mathbf{B}$, $\mathbf{Y}(t) \coloneqq \left(\mathbf{B} - t\mathbf{I}\right)^{-1}$ and factor it out to get
\begin{equation}\label{eqn:yx}
\det\left(\mathbf{B}^{c} - t\mathbf{I}\right) = t^{2d} \det\left(\mathbf{B} - t\mathbf{I}\right) \det\left(\mathbf{I} + \frac{\mathbf{Y}(t) \mathbf{X}}{t^{2}}\right).
\end{equation}

\section{Interlacing of the Perron eigenvalue}\label{sec:interlacing}
In this section, we establish that $\lambda_c > \lambda_1$.
Equation \eqref{eqn:yx} implies that $t$ is an eigenvalue of $\mathbf{B}^{c}$ that is not an eigenvalue of $\mathbf{B}$ if and only if $\det\left( \mathbf{I} + \frac{\mathbf{Y}(t) \mathbf{X}}{t^{2}}\right) = 0$, that is, exactly when $-t^{2}$ is an eigenvalue of $\mathbf{Y}(t) \mathbf{X}$.
In the following lines, we give $\mathbf{Y}(t)$ a suitable form, which will then allow us to show that there exists a real eigenvalue of $\mathbf{B}^{c}$, namely its Perron eigenvalue $\lambda_{c}$, such that $\lambda_{c} > \lambda_1$.
We proceed along the following steps:
\vspace{-1em}
\begin{enumerate}
\item Use the assumption of diagonalizability of $\mathbf{B}$ to rewrite its resolvent $\mathbf{Y}(t)$.
\item Apply the Perron-Frobenius theorem to $\mathbf{Y}(t) \mathbf{X}$ to find its Perron eigenvalue $y(t)$.
\item Define an auxiliary matrix $\mathbf{H} = \mathbf{H}(t)$ which also has $y(t)$ as an eigenvalue.
\item Apply Gershgorin's Disk theorem to $\mathbf{H}$ to show that at some $t_c$ it holds that $y(t_c) = -\left( t_c \right)^{2}$, as desired. By the previous remark, $t_c$ is in fact equal to $\lambda_{c}$, the Perron eigenvalue of $\mathbf{B}^{c}$.
\end{enumerate}

\paragraph*{Step 1: Rewriting the resolvent}
Suppose $\mathbf{B}$ is diagonalizable with $\mathbf{R}$ a matrix of right eigenvectors as columns and $\mathbf{L}$ a matrix with left eigenvectors as rows such that $\mathbf{I} = \mathbf{R}\mathbf{L} = \mathbf{L} \mathbf{R}$.
Define $\mathbf{T}$ as the diagonal matrix with $\mathbf{T}_{ii} = \mathbf{T}_{ii}(t) \coloneqq \left(\lambda_{i} - t\right)^{-1}$ for $i = 1,\ldots,2m$, where $\lambda_{i}$ are the eigenvalues of $\mathbf{B}$ sorted according to decreasing modulus; if two eigenvalues have the same modulus, sort them arbitrarily.
Then we can write 
\begin{equation}
\mathbf{Y}(t) = \left( \mathbf{B} - t\mathbf{I} \right)^{-1} = \left( \mathbf{R} \mathbf{\Lambda} \mathbf{L} - t \mathbf{I} \right)^{-1} = \mathbf{R} \left( \mathbf{\Lambda} - t\mathbf{I}\right)^{-1} \mathbf{L} = \mathbf{R} \mathbf{T} \mathbf{L} = \sum_{i} \frac{\mathbf{v}_{i}^{R}\mathbf{v}_{i}^{L}}{\lambda_{i} - t},
\end{equation}
where $\mathbf{\Lambda}$ contains all eigenvalues in order, the $\mathbf{v}_{i}^{R}$ are the columns of $\mathbf{R}$, and $\mathbf{v}_{i}^{L}$ are the rows of $\mathbf{L}$. 
Note since $\mathbf{R}\mathbf{L} = \mathbf{I}$, we have $\mathbf{v}_{i}^{L}\mathbf{v}_{i}^{R}=1$\@.
As mentioned above, we are looking for an eigenvalue of ${\mathbf{Y}(t) \mathbf{X} = \mathbf{R} \mathbf{T}(t) \mathbf{L} \mathbf{X}}$ that equals $-t^{2}$.
In what follows we drop the dependence on $t$ when possible for ease of notation.

\begin{lem}
[\textbf{Step 2: Apply the Perron-Frobenius theorem}]\label{lem:yx-negative}
Fix a real number $t$ with $t > \lambda_1$ and let $\rho(t)$ be the spectral radius of $\mathbf{Y}(t) \mathbf{X}$. Then $\mathbf{Y}(t) \mathbf{X}$ has a simple real negative eigenvalue $y(t)$ such that $y(t) = -\rho(t)$.
\end{lem}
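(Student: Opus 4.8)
The plan is to exhibit a nonnegative matrix that is similar to $\mathbf{Y}(t)\mathbf{X}$ up to an overall sign, and then invoke Perron--Frobenius. The key observation is that $\mathbf{X}$ is entrywise nonnegative (indeed binary, by Equation \eqref{eqn:def-x}), so the sign structure of $\mathbf{Y}(t)\mathbf{X}$ is governed entirely by $\mathbf{Y}(t) = (\mathbf{B}-t\mathbf{I})^{-1}$. Since $\mathbf{B}$ is itself entrywise nonnegative and $t > \lambda_1 = \rho(\mathbf{B})$, the Neumann-type expansion
\begin{equation*}
\mathbf{Y}(t) = (\mathbf{B}-t\mathbf{I})^{-1} = -\tfrac{1}{t}\bigl(\mathbf{I} - \tfrac{1}{t}\mathbf{B}\bigr)^{-1} = -\tfrac{1}{t}\sum_{r \geq 0} \tfrac{1}{t^{r}}\mathbf{B}^{r}
\end{equation*}
converges (because $\|\mathbf{B}/t\|$ is eventually contracting in the spectral sense, $\rho(\mathbf{B}/t) < 1$), and shows that $-\mathbf{Y}(t)$ is entrywise nonnegative. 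Hence $\mathbf{Z}(t) \coloneqq -\mathbf{Y}(t)\mathbf{X}$ is entrywise nonnegative, and its spectral radius equals $\rho(t)$, the spectral radius of $\mathbf{Y}(t)\mathbf{X}$, since the two matrices differ only by a global sign.

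Next I would apply the Perron--Frobenius theorem to $\mathbf{Z}(t)$. The nonnegative case of Perron--Frobenius already guarantees that $\rho(t)$ is itself an eigenvalue of $\mathbf{Z}(t)$ with a nonnegative eigenvector; translating back, $-\rho(t)$ is an eigenvalue of $\mathbf{Y}(t)\mathbf{X}$, which gives the real negative eigenvalue $y(t) = -\rho(t)$ with no irreducibility hypothesis at all. For the remaining claim --- that $y(t)$ is \emph{simple} --- I would argue that $\mathbf{Z}(t)$ is in fact irreducible (or at least that the Perron eigenvalue is simple) by a combinatorial argument: $\mathbf{X} = \mathbf{D}\mathbf{F}\mathbf{E}$ records the blue--yellow--yellow--blue walks created by the new node $c$, so a power of $\mathbf{Z}(t)$ has a positive entry in position $(k\to l, i\to j)$ whenever there is an NB walk in $G$ linking an edge incident to $c$ back to another such edge; connectedness of $G$ (assumed throughout) together with minimum degree at least $2$ ensures that any two edges incident to $c$ are so linked, giving irreducibility of $\mathbf{Z}(t)$ on the relevant invariant subspace. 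Applying the strong form of Perron--Frobenius then yields simplicity of $\rho(t)$, hence of $y(t)$.

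I expect the main obstacle to be the simplicity claim rather than the existence of the real negative eigenvalue. The subtlety is that $\mathbf{X}$ has many zero rows and columns (it is supported only on the $2d$ edges incident to $c$, padded by zeros over the other $2m-2d$ coordinates), so $\mathbf{Z}(t)$ is manifestly \emph{reducible} as a $2m\times 2m$ matrix; one must restrict to the range of $\mathbf{X}$ (or equivalently work with the nonzero block after a permutation), verify that $\mathbf{Z}(t)$ leaves this subspace invariant and acts irreducibly there, and then check that the zero block contributes only the eigenvalue $0$, which does not interfere with $\rho(t) > 0$. Care is also needed to confirm $\rho(t) > 0$ strictly (so that $y(t) \neq 0$ and the Perron eigenvalue is genuinely the dominant one); this follows because $G$ has a cycle through or near $c$ once $d \geq 2$, making $\mathbf{Z}(t)$ not nilpotent. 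Once these structural points are nailed down, the statement follows directly from the classical Perron--Frobenius theorem applied to the irreducible block of the nonnegative matrix $\mathbf{Z}(t) = -\mathbf{Y}(t)\mathbf{X}$.
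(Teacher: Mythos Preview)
Your proposal is correct and follows essentially the same route as the paper: expand $\mathbf{Y}(t)$ as a Neumann series to see that $-\mathbf{Y}(t)\mathbf{X}$ is entrywise nonnegative, then pass to the nonzero block and apply Perron--Frobenius. The one place where the paper is slightly sharper than your plan is the simplicity argument you flagged as the main obstacle: because $G$ is connected with minimum degree at least $2$, the matrix $-\mathbf{Y}(t)$ is in fact entrywise \emph{strictly positive} (every $(\mathbf{B}^k)_{e_1e_2}$ is eventually positive), so every nonzero column of $-\mathbf{Y}(t)\mathbf{X}$ is strictly positive and the square block $-\mathbf{Y}_1$ obtained after permuting those columns to the front is a strictly positive matrix; the classical Perron theorem for positive matrices then gives simplicity immediately, with no need for your separate irreducibility argument on the restricted block.
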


\begin{proof}
Since $t > \lambda_{1}$, the following Neumann series converges and we have, for any two oriented edges $e_{1},e_{2}$,
\begin{equation}
\left(\mathbf{Y} \mathbf{X} \right){}_{e_{1}e_{2}} = -\sum_{k=0}^{\infty} \frac{1}{t^{k+1}} \left(\mathbf{B}^{k} \mathbf{X}\right)_{e_{1}e_{2}}.
\end{equation}
Since both $\mathbf{B}$ and $\mathbf{X}$ are binary matrices, $( \mathbf{B}^k \mathbf{X} )_{e_1 e_2}$ is non-negative. Furthermore, since the graph is connected, for each entry $e_{1}e_{2}$ there exists an integer $k$ such that $(\mathbf{B}^{k})_{e1e2}$ is positive.
Therefore, $\left( \mathbf{Y} \mathbf{X} \right)_{e_{1}e_{2}}$ is negative unless every element in the $e_{2}$ column of $\mathbf{X}$ is zero.
Thus, after reordering its columns, $\mathbf{Y} \mathbf{X}$ has the block form
\begin{equation}
\mathbf{Y} \mathbf{X}=\left(\begin{array}{cc}
\mathbf{Y}_{1} & 0\\
\mathbf{Y}_{2} & 0
\end{array}\right),
\end{equation}
for some square matrix $\mathbf{Y}_{1}$ and rectangular matrix $\mathbf{Y}_{2}$.
Thus the eigenvalues of $\mathbf{Y} \mathbf{X}$ are equal to the eigenvalues of $\mathbf{Y}_{1}$.
But the entries of $\mathbf{Y}_{1}$ are all strictly negative, thus the Perron-Frobenius theorem implies that there is a negative real number $y = y(t)$ such that it is a simple eigenvalue of $\mathbf{Y} \mathbf{X}$ equal to $-\rho(t)$.
\end{proof}

\paragraph*{Step 3: Define the auxiliary matrix.}
Consider the matrix $\mathbf{H} = \mathbf{H}(t) \coloneqq \mathbf{TLXR}$.
Note that $\mathbf{H}$ and $\mathbf{YX} = \mathbf{RTLX}$ are cyclic permutations of the same matrix product and therefore they have the same eigenvalues.
In particular $y$ is an eigenvalue of $\mathbf{H}$, for each $t$. Furthermore, we have
\begin{equation}
\mathbf{H}_{ij} = \frac{\mathbf{v}^L_i \mathbf{X}  \mathbf{v}^R_j}{\lambda_i - t}.
\end{equation}
For convenience, in what follows we write $\alpha_{ij} \coloneqq \mathbf{v}^L_i \mathbf{X} \mathbf{v}^R_j$. Importantly, $\alpha_{ij}$ is constant with respect to $t$.

\begin{thm}[\bf{Step 4: Apply Gershgorin's Disk theorem}]\label{thm:interlacing}
Suppose $\mathbf{B}$ is diagonalizable with resolvent $\mathbf{Y}(t)$ and let $\mathbf{X}$ be defined by Equation \eqref{eqn:def-x} after adding a new node c of degree $d \geq 2$.
Then there exists a real number $\lambda_{c} > \lambda_1$ such that $-\lambda_{c}^{2}$ is an eigenvalue of $\mathbf{Y}(\lambda_{c}) \mathbf{X}$ and $\lambda_{c}$ is an eigenvalue of $\mathbf{B}^{c}$.
\end{thm}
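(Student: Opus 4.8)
The plan is to recast the claim as a one-dimensional root-finding problem for the function $g(t):=y(t)+t^{2}$ on the interval $(\lambda_1,\infty)$. By Lemma~\ref{lem:yx-negative} and the construction of Step~3, for every real $t>\lambda_1$ the matrix $\mathbf{H}(t)=\mathbf{TLXR}$ has $y(t)=-\rho(\mathbf{Y}(t)\mathbf{X})<0$ as an eigenvalue, and since $\rho(\mathbf{H}(t))=\rho(\mathbf{Y}(t)\mathbf{X})=|y(t)|$ it is in fact the eigenvalue of $\mathbf{H}(t)$ of largest modulus. First I would record that $y$ is continuous --- indeed analytic --- on $(\lambda_1,\infty)$: the entries $\mathbf{H}_{ij}(t)=\alpha_{ij}/(\lambda_i-t)$ are rational in $t$ and $y(t)$ is a simple eigenvalue, so it varies analytically. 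It then suffices to show $g(t)\to-\infty$ as $t\to\lambda_1^{+}$ and $g(t)\to+\infty$ as $t\to\infty$, after which the intermediate value theorem produces $\lambda_c\in(\lambda_1,\infty)$ with $y(\lambda_c)=-\lambda_c^{2}$.

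For the limit $t\to\infty$ I would apply Gershgorin's disk theorem directly to $\mathbf{H}(t)$: each $\alpha_{ij}$ is constant in $t$, so every entry $\mathbf{H}_{ij}(t)=\alpha_{ij}/(\lambda_i-t)$ tends to $0$ and every Gershgorin disk of $\mathbf{H}(t)$ shrinks to $\{0\}$; hence every eigenvalue of $\mathbf{H}(t)$, in particular $y(t)$, tends to $0$, so $g(t)=y(t)+t^{2}\to+\infty$.

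For the limit $t\to\lambda_1^{+}$ I want $y(t)\to-\infty$, equivalently $\rho(\mathbf{Y}(t)\mathbf{X})\to\infty$. I see two equivalent routes. One uses the strictly positive matrix $-\mathbf{Y}_1(t)$ from the proof of Lemma~\ref{lem:yx-negative}: its entries are the Neumann sums $\sum_{k\ge0}t^{-k-1}(\mathbf{B}^{k}\mathbf{X})_{e_1e_2}$, whose terms grow like $\lambda_1^{k}$ by connectivity, so each row sum of $-\mathbf{Y}_1(t)$ diverges as $t\to\lambda_1^{+}$, and $\rho(-\mathbf{Y}_1(t))$ is at least the minimum row sum. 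The other is again Gershgorin on $\mathbf{H}(t)$, exploiting its structure near $t=\lambda_1$: the first disk $D_1(t)$ is centred at $\mathbf{H}_{11}(t)=\alpha_{11}/(\lambda_1-t)$ with $\alpha_{11}=\mathbf{v}_1^{L}\mathbf{X}\mathbf{v}_1^{R}>0$ --- because the Perron eigenvectors of $\mathbf{B}$ are positive and $\mathbf{X}$ is a nonzero nonnegative matrix since $d\ge2$ --- so its centre runs off to $-\infty$, while the remaining disks stay in a fixed bounded set (here $\lambda_i\ne\lambda_1$ for $i\ge2$ is used); since $y(t)$ is real, negative, and of largest modulus, it must eventually lie in $D_1(t)$, and then $|y(t)-\mathbf{H}_{11}(t)|\le R_1(t)$ forces $y(t)\to-\infty$. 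Either way $g(t)\to-\infty$ as $t\to\lambda_1^{+}$.

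To finish, the intermediate value theorem applied to $g$ gives $\lambda_c\in(\lambda_1,\infty)$ with $y(\lambda_c)=-\lambda_c^{2}$; by Lemma~\ref{lem:yx-negative} this means $-\lambda_c^{2}$ is an eigenvalue of $\mathbf{Y}(\lambda_c)\mathbf{X}$, equivalently $\det\!\big(\mathbf{I}+\mathbf{Y}(\lambda_c)\mathbf{X}/\lambda_c^{2}\big)=0$. Since $\lambda_c>\lambda_1>0$ we have $\lambda_c\ne0$ and $\det(\mathbf{B}-\lambda_c\mathbf{I})\ne0$, so \eqref{eqn:yx} gives $\det(\mathbf{B}^{c}-\lambda_c\mathbf{I})=0$, i.e.\ $\lambda_c$ is an eigenvalue of $\mathbf{B}^{c}$. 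A short supplement --- that $\rho(\mathbf{Y}(t)\mathbf{X})$ is strictly decreasing in $t$, and that any real eigenvalue of $\mathbf{B}^{c}$ above $\lambda_1$ solves the same equation $y(t)=-t^{2}$ --- shows $\lambda_c$ is unique and equals the Perron eigenvalue of $\mathbf{B}^{c}$. The main obstacle is the $t\to\lambda_1^{+}$ step: Gershgorin only bounds eigenvalues from the outside, so $\rho(\mathbf{Y}(t)\mathbf{X})\to\infty$ needs a genuine lower bound on a spectral radius, supplied by the row-sum estimate on $-\mathbf{Y}_1(t)$ (or by trapping $y(t)$ inside the blowing-up disk $D_1(t)$, which in turn rests on $\alpha_{11}>0$ and hence on the positivity of the Perron eigenvectors of $\mathbf{B}$ --- a point worth spelling out).
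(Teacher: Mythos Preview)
Your strategy mirrors the paper's exactly: reduce to a sign change of $g(t)=y(t)+t^{2}$ on $(\lambda_1,\infty)$, control the two ends via Gershgorin on $\mathbf{H}(t)$, and apply the intermediate value theorem. The $t\to\infty$ end and the continuity/simplicity of $y$ are handled just as in the paper.

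For $t\to\lambda_1^{+}$ you offer two routes. Route~1 (the Neumann-series row-sum lower bound on the strictly positive block $-\mathbf{Y}_1(t)$) is correct and is in fact a cleaner alternative to the paper's disk argument: it supplies a genuine \emph{lower} bound on $\rho(\mathbf{Y}(t)\mathbf{X})$ and bypasses Gershgorin entirely at this end. The growth claim $(\mathbf{B}^{k}\mathbf{X})_{e_1e_2}\asymp\lambda_1^{k}$ is justified by Perron--Frobenius for $\mathbf{B}$ together with the fact that the $e_2$-column of $\mathbf{X}$ is nonzero on the $\mathbf{Y}_1$ block, so the minimum row sum of $-\mathbf{Y}_1(t)$ does diverge.

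Route~2 has a gap: the two assertions ``the remaining disks stay in a fixed bounded set'' and ``$|y(t)-\mathbf{H}_{11}(t)|\le R_1(t)$ forces $y(t)\to-\infty$'' cannot both hold for the same Gershgorin convention. With column radii, $R_1(t)=\sum_{i\ne1}|\alpha_{i1}|/|\lambda_i-t|$ stays bounded, but each $r_j$ for $j\ne1$ contains the term $|\alpha_{1j}|/|\lambda_1-t|$ and blows up, so the other disks are \emph{not} bounded. With row radii, the $j\ne1$ disks are bounded, but then $R_1(t)=\sum_{i\ne1}|\alpha_{1i}|/|\lambda_1-t|$ blows up at the same rate as the centre, and membership in $D_1$ no longer forces $y(t)\to-\infty$ without an extra diagonal-dominance hypothesis $\alpha_{11}>\sum_{i\ne1}|\alpha_{1i}|$. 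Since route~1 already works, simply rely on it; if you want a disk-free patch, note that $\mathrm{tr}\,\mathbf{H}(t)=\sum_j\alpha_{jj}/(\lambda_j-t)\to-\infty$ forces some eigenvalue, hence the one of largest modulus $y(t)$, to diverge.
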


\begin{proof}
Put $r_{j} \coloneqq \sum_{i \neq j} \left| \mathbf{H}_{ij} \right|$ and define the $j^{th}$ Gershgorin disk as $D_{j}\coloneqq\left\{ z:\left|z - \mathbf{H}_{jj}\right|\leq r_{j}\right\} $.
Note here that both the center and the radius of each disk $D_{j}$ are changing as a function of $t$.
Gershgorin's disk theorem says that all eigenvalues of $\mathbf{H}$ must be contained in the union of all $D_{j}$.
Furthermore, a strengthened version of the theorem says that if one of the disks is isolated from the rest, then it must contain exactly one eigenvalue (see e.g. Theorem 6.1.1 in \cite{horn}).

Now let $\epsilon \coloneqq t - \lambda_1$. To prove the existence of $\lambda_{c}$, we proceed to prove the following three assertions, as illustrated in Figure \ref{fig:disks}:
\vspace{-0.5em}
\begin{enumerate}[leftmargin=.6cm]
\item[(1)] For small $\epsilon > 0$, $D_{1}$ is disjoint from all other disks and it must contain $y$, the least eigenvalue of $\mathbf{H}$. 
\item[(2)] For small $\epsilon > 0$, every real number in $D_{1}$ is less than $-t^{2}$.
\item[(3)] As $\epsilon$ goes to $\infty$, every real number in $D_{j}$ must be greater than $-t^{2}$, for all $j$.
\end{enumerate}
\vspace{-0.5em}
Since $y$ is a real continuous function of $t$, these three assertions imply that at some point $\lambda_{c}$, with $ \lambda_c > \lambda_1$, we must have $y(\lambda_{c})=-\lambda_{c}^{2}$, and therefore the theorem follows. We address all three claims in turn with the following inequalities. Note we can write
\begin{equation}
D_{j} = \left\{ z: \left|z - \mathbf{H}_{jj} \right| < r_{j} \right\} = \left\{ z:\left|z - \frac{\alpha_{jj}}{\lambda_{j} - \lambda_1 - \epsilon} \right| \le \sum_{i\neq j} \left| \frac{\alpha_{ij}}{\lambda_i - \lambda_{1} - \epsilon} \right| \right\} .
\end{equation}

\begin{figure}
\begin{centering}
\includegraphics[width=\textwidth]{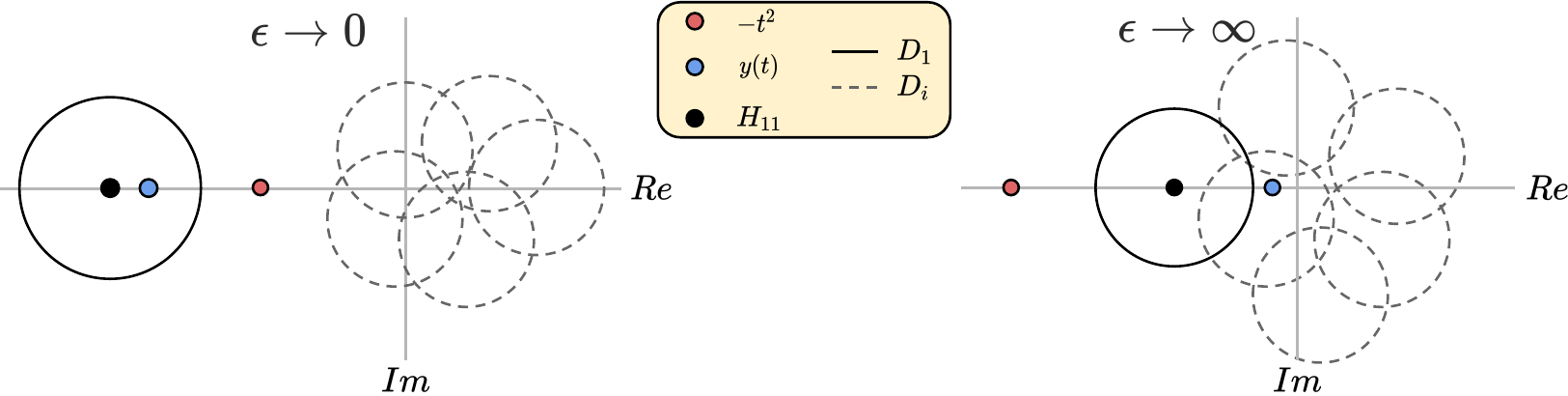}
\par\end{centering}
\caption{\label{fig:disks}
\textbf{Left: } If  $t\!=\!\lambda\!+\!\epsilon$ and $\epsilon$ is small,
$y(t)$ lies inside $D_{1}$ which in turn lies to the left of $-t^{2}$
and is disjoint from the rest. \textbf{Right:} when $\epsilon \to \infty$,
$y(t)$ lies in some of the $D_{i}$, all of which lie to the right
of $-t^{2}$.}
\end{figure}

For assertion (1), consider $D_{1}$ when $\epsilon$ approaches $0$ from above.
Write $\mathbf{H}_{11} + \eta r_{1}$ for an arbitrary number inside $D_{1}$ where $\eta$ is a complex number with $\left|\eta\right|\le1$.
Similarly, write $\mathbf{H}_{jj}+\eta'r_{j}$ for an arbitrary element in $D_{j}$, for $j\neq1$, where $\eta'$ is a complex number with $|\eta'|\le1$.
Then we have 
\begin{equation}
\left| \mathbf{H}_{11} + \eta r_{1} \right| = \left| - \frac{\alpha_{11}}{\epsilon} + \eta \sum_{i \neq 1} \left| \frac{\alpha_{i1}}{\lambda_i - \lambda_1 - \epsilon} \right| \right| > \left| \frac{\alpha_{jj}}{\lambda_{j} - \lambda_1 - \epsilon} + \eta' \sum_{i \neq j} \left| \frac{\alpha_{ij}}{\lambda_i - \lambda_1 - \epsilon} \right| \right| = \left|\mathbf{H}_{jj} + \eta'r_{j} \right|.
\end{equation}
When $\epsilon$ tends to zero from above, the left-hand side of the inequality tends to infinity, regardless of the value of $\eta$ or $\alpha_{ij}$.
The right-hand side of the inequality tends to a constant, regardless of the other values.
Thus, the inequality holds for small enough $\epsilon$, and in this regime $D_{1}$ is disjoint from all other disks.
Since $y$ is the least eigenvalue of $\mathbf{H}$, $D_{1}$ contains $y$ and no other eigenvalue of $\mathbf{H}$.

For assertion (2), consider an arbitrary real number inside $D_{1},$ written as $\mathbf{H}_{11}+\eta r_{1}$ for some real $\eta\in[-1,1]$. The following inequality holds when $\epsilon$ is sufficiently small,
\begin{equation}
\mathbf{H}_{11} + \eta r_{1} = - \frac{\alpha_{11}}{\epsilon} + \eta \sum_{i \neq 1} \left| \frac{\alpha_{i1}}{\lambda_i - \lambda_1 - \epsilon} \right| < -\left( \lambda_1 + \epsilon \right)^{2},
\end{equation}
provided that $\alpha_{11}$ is non-negative.
But applying the Perron-Frobenius theorem to $\mathbf{B}$ implies that $\mathbf{v}_{1}^{R}$ and $\mathbf{v}_{1}^{L}$ can be chosen with all positive entries and therefore $\alpha_{11}=\mathbf{v}_{1}^{L} \mathbf{X} \mathbf{v}_{1}^{R}$ is positive.

For assertion (3), when $\epsilon \to \infty$ we have 
\begin{equation}
-\left( \lambda_1 + \epsilon \right)^{2} < \frac{\alpha_{jj}}{\lambda_j - \lambda_1 - \epsilon} + \eta \sum_{i \neq j} \left| \frac{\alpha_{ij}}{\lambda_{j} - \lambda_1 - \epsilon} \right|,
\end{equation}
for each $j$ and each real $\eta\in[-1,1]$. This finishes the proof.
\end{proof}

\begin{rem*}
The coefficient $\alpha_{11} = \mathbf{v}^L_1 \mathbf{X} \mathbf{v}^R_1$ is what the authors of \cite{torres2020} called the \emph{X-non-backtracking centrality} of the newly added node $c$.
In the context of the cited paper, it was used to quantify the influence of the node $c$ in the Perron eigenvalue.
\end{rem*}

\begin{rem*}
The arguments in this and following sections apply without modification to the inverse setting of node removal.
That is, when the starting graph is $G^c$ and we construct $G$ by removing an existing node $c$. 
Thus, the main theorem shows that the Perron eigenvalue never decreases when adding a node and it never increases when removing a node. 
Furthermore, as explained before, nodes of degree $1$ have no influence in the non-zero part of the spectrum.
Thus, when the degree of $c$ is at least $2$, the Perron eigenvalue always increases when $c$ is added and it always decreases when $c$ is removed.
\end{rem*}

\section{Upper bounds}\label{sec:bounds}
Theorem \ref{thm:interlacing} allows us to bound the change in the Perron eigenvalue after node addition.

\begin{thm}\label{thm:bound}
Let $\lambda_c = \lambda_1 + \epsilon_c$ be the Perron eigenvalue after node addition.
Define $\gamma$ as the spectral gap of $\mathbf{B}$, that is $\gamma \coloneqq \max_i |\lambda_1 - \lambda_i|$, and let $\| \cdot \|_p$ be any induced $p$-norm for $1 \leq p \leq \infty$.
Then,
\begin{equation}\label{eqn:bound-1}
\epsilon_c \leq \max \left\{ \frac{\left\| \mathbf{L X R} \right\|_p}{\lambda_1^2}, \sqrt{\frac{\left\| \mathbf{L X R} \right\|_p}{\gamma}} - \lambda_1 \right\}.
\end{equation}
\end{thm}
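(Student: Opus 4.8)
The plan is to convert the single fact extracted in Theorem~\ref{thm:interlacing} --- that $-\lambda_c^2$ is an eigenvalue of $\mathbf{Y}(\lambda_c)\mathbf{X}$ --- into a scalar inequality for $\epsilon_c = \lambda_c - \lambda_1$, and then to read off \eqref{eqn:bound-1} by an elementary case split. First I would recall, exactly as in Step~3 of Section~\ref{sec:interlacing}, that $\mathbf{Y}(\lambda_c)\mathbf{X} = \mathbf{R}\mathbf{T}(\lambda_c)\mathbf{L}\mathbf{X}$ and $\mathbf{H}(\lambda_c) = \mathbf{T}(\lambda_c)\mathbf{LXR}$ are cyclic permutations of the same matrix product and hence have the same eigenvalues; in particular $-\lambda_c^2$ is an eigenvalue of $\mathbf{H}(\lambda_c)$. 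Since the spectral radius of a matrix is at most any of its induced norms, and induced norms are submultiplicative, this gives
\[
\lambda_c^2 \;=\; \bigl| -\lambda_c^2 \bigr| \;\le\; \rho\bigl(\mathbf{H}(\lambda_c)\bigr) \;\le\; \bigl\| \mathbf{H}(\lambda_c) \bigr\|_p \;\le\; \bigl\| \mathbf{T}(\lambda_c) \bigr\|_p \, \bigl\| \mathbf{LXR} \bigr\|_p
\]
for every $p \in [1,\infty]$.

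Next I would evaluate $\bigl\| \mathbf{T}(\lambda_c) \bigr\|_p$ exactly. The matrix $\mathbf{T}(\lambda_c)$ is diagonal with entries $(\lambda_i - \lambda_c)^{-1}$, so for every induced $p$-norm one has $\bigl\| \mathbf{T}(\lambda_c) \bigr\|_p = \max_i |\lambda_i - \lambda_c|^{-1} = \bigl(\min_i |\lambda_i - \lambda_c|\bigr)^{-1}$. This is exactly where Theorem~\ref{thm:interlacing} is used: because $\lambda_c > \lambda_1 = \rho(\mathbf{B}) \ge |\lambda_i|$ for all $i$, the triangle inequality gives $|\lambda_i - \lambda_c| \ge \lambda_c - |\lambda_i| \ge \lambda_c - \lambda_1 = \epsilon_c$, with equality at the Perron index of $\mathbf{B}$. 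Hence $\min_i |\lambda_i - \lambda_c| = \epsilon_c$, so $\bigl\| \mathbf{T}(\lambda_c) \bigr\|_p = 1/\epsilon_c$, and the display above collapses to the single clean inequality
\[
\lambda_c^2 \, \epsilon_c \;\le\; \| \mathbf{LXR} \|_p .
\]

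Finally I would turn this cubic inequality (write $M = \| \mathbf{LXR} \|_p$, so $(\lambda_1 + \epsilon_c)^2 \epsilon_c \le M$) into the advertised maximum by splitting on the size of $\epsilon_c$ relative to the spectral gap $\gamma$. Using $\lambda_c = \lambda_1 + \epsilon_c > \lambda_1 > 0$: if $\epsilon_c \le \gamma$, then $\lambda_1^2 \epsilon_c \le \lambda_c^2 \epsilon_c \le M$, so $\epsilon_c \le M/\lambda_1^2$; if instead $\epsilon_c > \gamma$, then $\gamma\lambda_c^2 \le \epsilon_c\lambda_c^2 \le M$, so $\lambda_c \le \sqrt{M/\gamma}$ and therefore $\epsilon_c \le \sqrt{M/\gamma} - \lambda_1$. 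Either way $\epsilon_c \le \max\bigl\{ M/\lambda_1^2,\, \sqrt{M/\gamma} - \lambda_1 \bigr\}$, as claimed. I do not expect a genuine obstacle here: the substantive work --- the existence of a real $\lambda_c$ strictly exceeding $\rho(\mathbf{B})$ with $-\lambda_c^2 \in \operatorname{spec}(\mathbf{Y}(\lambda_c)\mathbf{X})$ --- was already done in Theorem~\ref{thm:interlacing}, and what remains is the exact computation of the diagonal factor $\|\mathbf{T}(\lambda_c)\|_p$ (which is what forces the strict inequality $\lambda_c > \lambda_1$ to be invoked) together with the routine cubic-to-maximum bookkeeping; one should only note in passing that $\gamma > 0$, so that the second branch is well posed, which is immediate since $\mathbf{B}$ has more than one distinct eigenvalue.
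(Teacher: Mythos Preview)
Your argument is correct and follows the same architecture as the paper's proof: pass from $-\lambda_c^2\in\operatorname{spec}\bigl(\mathbf H(\lambda_c)\bigr)$ to $\lambda_c^2\le\|\mathbf T(\lambda_c)\|_p\,\|\mathbf{LXR}\|_p$, control the diagonal factor, and split into the two branches that produce the $\max$ in \eqref{eqn:bound-1}.

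The one genuine difference is in how the diagonal factor is handled. The paper bounds $\|\mathbf T\|_p\le\max\{\epsilon^{-1},\gamma^{-1}\}$ and then runs the two cases according to which term of this maximum is active. You instead compute $\|\mathbf T(\lambda_c)\|_p$ \emph{exactly} as $1/\epsilon_c$ (using $\lambda_c>\lambda_1\ge|\lambda_i|$), obtain the single cubic inequality $(\lambda_1+\epsilon_c)^2\epsilon_c\le\|\mathbf{LXR}\|_p$, and only afterwards introduce $\gamma$ through the ad~hoc dichotomy $\epsilon_c\le\gamma$ versus $\epsilon_c>\gamma$. This buys you two things: a strictly sharper intermediate estimate (the cubic inequality already implies \eqref{eqn:bound-1} but is not implied by it), and you avoid having to justify the paper's bound $|\lambda_j-\lambda_c|\ge\gamma$ for $j\neq 1$, which is the step where the precise meaning of the spectral gap $\gamma$ would otherwise need care. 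The cost is that the role of $\gamma$ looks slightly artificial in your write-up, since it enters only to match the stated form of the bound rather than arising naturally from the estimate on $\|\mathbf T\|_p$.
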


\begin{proof}
Recall the norm of a matrix is greater than or equal to its spectral radius, for any sub-multiplicative norm (see e.g. Theorem 5.6.9 of \cite{horn}). In particular, we have
\begin{equation}\label{eqn:norm-y}
\left| y \left( t \right) \right| \leq \left\| \mathbf{H} \right\|_p \leq \left\| \mathbf{T} \right\|_p \left\| \mathbf{L X R} \right\|_p.
\end{equation}
Now put $\epsilon = t - \lambda_1 > 0$ for an arbitrary $t$ and consider the norm of $\mathbf{T}$.
For any $1 \leq p \leq \infty$, we have
\begin{equation}\label{eqn:norm-t}
\left\| \mathbf{T} \right\|_p = \max_j \frac{1}{|\lambda_j - t|} = \max \left\{ \frac{1}{\left| \lambda_1 - t \right|}, \,\, \max_{j \neq 1} \frac{1}{|\lambda_j - t|} \right\} \leq \max \left\{ \epsilon^{-1}, \gamma^{-1} \right\}.
\end{equation}
We now consider Equations \eqref{eqn:norm-y} and \eqref{eqn:norm-t} in the particular case  $\epsilon = \epsilon_c$, that is, the time $t_c$ at which $t_c = \lambda_c = \lambda_1 + \epsilon_c$ and $y\left( t_c \right) = - \left( t_c \right)^2 $.
We consider two cases, depending on the maximum in Equation \eqref{eqn:norm-t}.

First, if $\left| y(t_c) \right| \leq \epsilon_c^{-1} \left\| \mathbf{L X R} \right\|_p$, we have
\begin{equation}
\lambda_1^2 < \left| -\left( \lambda_1 + \epsilon_c \right)^2 \right| = \left| y \left( t_c \right) \right| \leq \left( \epsilon_c \right)^{-1} \left\| \mathbf{L X R} \right\|_p,
\end{equation}
and thus
\begin{equation}\label{eqn:ineq-1}
\epsilon_c \leq \left\| \mathbf{L X R} \right\|_p / \lambda_1^2.
\end{equation}

Second, if $\left| y(t_c) \right| \leq \gamma^{-1} \left\| \mathbf{L X R} \right\|_p$, we have
\begin{equation}
\lambda_1 + \epsilon_c = \sqrt{\left| y\left( t_c \right) \right| } \leq \sqrt{ \gamma^{-1} \left\| \mathbf{L X R} \right\|_p },
\end{equation}
which finishes the proof.
\end{proof}

The upper bound in \eqref{eqn:bound-1} is not very satisfying because it still depends on $\| \mathbf{L X R} \|_p$, which requires knowledge of the entire eigendecomposition of $\mathbf{B}$.
We now provide another bound, for the case $p = 2$, that only requires knowledge of elementary information.




\begin{prop}\label{pro:bound-x}
Let $\mathbf{B}$ be diagonalized as $\mathbf{B} = \mathbf{R \Lambda L}$ and let $\mathbf{X}$ be defined as in Equation \eqref{eqn:def-x} by adding node $c$. Let $\deg \left(j\right)$ be the degree of node $j$ before $c$ is added to the graph. Then, assuming that Equation \eqref{eqn:assumption} holds, we have
\begin{equation}\label{eqn:main-bound}
\left\| \mathbf{L X R} \right\|_2 \leq \mathbf{1}^{\top} \mathbf{X} \mathbf{1} = \left( \sum_j a_{cj} \deg \left( j \right) \right)^2 - \sum_j a_{cj} \deg\left(j \right)^2,
\end{equation}
where $n$ is the number of nodes in the graph.
\end{prop}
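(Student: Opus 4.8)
The plan is to prove the two parts of \eqref{eqn:main-bound} separately: first the norm inequality $\left\| \mathbf{L X R} \right\|_2 \leq \mathbf{1}^{\top} \mathbf{X} \mathbf{1}$, and then the combinatorial identity $\mathbf{1}^{\top} \mathbf{X} \mathbf{1} = \left( \sum_j a_{cj} \deg(j) \right)^2 - \sum_j a_{cj} \deg(j)^2$. The second part is a routine counting exercise: using the explicit formula \eqref{eqn:def-x}, $\mathbf{X}_{k \to l, i \to j} = a_{ck} a_{cj}(1 - \delta_{jk})$, so $\mathbf{1}^{\top} \mathbf{X} \mathbf{1}$ counts ordered pairs of oriented edges $(i \to j, k \to l)$ with $c$ adjacent to both $j$ and $k$ and $j \neq k$. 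Summing over the free endpoints $i$ (a neighbor of $j$) and $l$ (a neighbor of $k$) gives $\sum_{j, k:\, j \neq k} a_{cj} a_{ck} \deg(j) \deg(k)$, and adding and subtracting the diagonal $j = k$ terms yields $\big(\sum_j a_{cj}\deg(j)\big)^2 - \sum_j a_{cj}\deg(j)^2$. I would present this as a short calculation.

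The substantive part is the norm bound. The key observation is that, under assumption \eqref{eqn:assumption}, the matrix $\mathbf{R}$ satisfies $\mathbf{R}^{\top}\mathbf{P}\mathbf{R} = \mathbf{I}$, equivalently $\mathbf{R}\mathbf{R}^{\top} = \mathbf{P}$ and $\mathbf{L} = \mathbf{R}^{\top}\mathbf{P}$, so $\mathbf{L}\mathbf{L}^{\top} = \mathbf{R}^{\top}\mathbf{P}\mathbf{P}\mathbf{R} = \mathbf{R}^{\top}\mathbf{R}$; this is not the identity, so $\mathbf{R}$ is not unitary and we cannot directly say $\|\mathbf{LXR}\|_2 = \|\mathbf{X}\|_2$. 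Instead I would bound $\|\mathbf{LXR}\|_2 \le \|\mathbf{L}\|_2 \|\mathbf{X}\|_2 \|\mathbf{R}\|_2$ and control each factor. Since $\mathbf{P}$ is orthogonal, $\mathbf{L} = \mathbf{R}^{\top}\mathbf{P}$ gives $\|\mathbf{L}\|_2 = \|\mathbf{R}^{\top}\|_2 = \|\mathbf{R}\|_2$, so the bound becomes $\|\mathbf{LXR}\|_2 \le \|\mathbf{R}\|_2^2 \,\|\mathbf{X}\|_2$. The quantity $\|\mathbf{R}\|_2^2 = \|\mathbf{R}\mathbf{R}^{\top}\|_2 = \|\mathbf{P}\|_2 = 1$ because $\mathbf{P}$ is orthogonal (all singular values $1$). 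Hence $\|\mathbf{LXR}\|_2 \le \|\mathbf{X}\|_2$. It then remains to show $\|\mathbf{X}\|_2 \le \mathbf{1}^{\top}\mathbf{X}\mathbf{1}$. Since $\mathbf{X}$ is a nonnegative matrix, $\|\mathbf{X}\|_2$ is at most $\sqrt{\|\mathbf{X}\|_1 \|\mathbf{X}\|_\infty}$ (the maximum column sum and maximum row sum), and each of those is at most the total sum $\mathbf{1}^{\top}\mathbf{X}\mathbf{1}$ since all entries are nonnegative; alternatively, for a nonnegative matrix the spectral radius — and in fact the operator $2$-norm here, since we may also invoke that $\|\mathbf{X}\|_2 \le \|\mathbf{X}\|_F \le \mathbf{1}^\top \mathbf{X}\mathbf{1}$ using that binary entries satisfy $\mathbf{X}_{ij}^2 = \mathbf{X}_{ij} \le \mathbf{1}^\top\mathbf{X}\mathbf{1}$ and $\|\mathbf{X}\|_F^2 = \sum_{ij}\mathbf{X}_{ij} \le (\mathbf{1}^\top\mathbf{X}\mathbf{1})^2$ — is bounded by $\mathbf{1}^{\top}\mathbf{X}\mathbf{1}$.

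I expect the main obstacle to be handled already by the preceding observation: the temptation is to treat $\mathbf{R}$ as if it were orthogonal (which would give the clean $\|\mathbf{LXR}\|_2 = \|\mathbf{X}\|_2$), but the correct statement is that $\mathbf{R}\mathbf{R}^{\top} = \mathbf{P}$ is orthogonal rather than $\mathbf{R}$ itself, and one must argue via $\|\mathbf{R}\|_2^2 = \|\mathbf{R}\mathbf{R}^{\top}\|_2 = \|\mathbf{P}\|_2 = 1$. Once that is in place the remaining estimates are elementary facts about nonnegative and binary matrices. A subtlety worth double-checking is that assumption \eqref{eqn:assumption} is genuinely needed here (it is what forces $\mathbf{R}\mathbf{R}^{\top} = \mathbf{P}$); without it the factor $\|\mathbf{R}\|_2^2$ could be the square of the condition number of the eigenvector matrix and the clean bound would fail. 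I would also remark that the Frobenius-norm route, $\|\mathbf{LXR}\|_2 \le \|\mathbf{X}\|_F$, is the cleanest way to close the gap to $\mathbf{1}^{\top}\mathbf{X}\mathbf{1}$ since $\mathbf{X}$ is binary.
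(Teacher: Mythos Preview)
Your combinatorial computation of $\mathbf{1}^\top \mathbf{X}\,\mathbf{1}$ is fine and matches the paper's.

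The norm inequality, however, has a genuine gap at the step $\|\mathbf{R}\|_2^{2}=\|\mathbf{R}\mathbf{R}^{\top}\|_2$. The valid identity is $\|\mathbf{R}\|_2^{2}=\|\mathbf{R}\mathbf{R}^{*}\|_2$ with the \emph{conjugate} transpose; with the plain transpose it can fail badly for complex matrices (e.g.\ $\mathbf{R}=\bigl(\begin{smallmatrix}1 & i\\ 0 & 0\end{smallmatrix}\bigr)$ has $\mathbf{R}\mathbf{R}^{\top}=0$ but $\|\mathbf{R}\|_2=\sqrt{2}$). Here $\mathbf{R}$ diagonalizes a non-normal matrix with complex spectrum, so it is genuinely complex; assumption~\eqref{eqn:assumption} gives $\mathbf{R}\mathbf{R}^{\top}=\mathbf{P}$, and the paper explicitly remarks that $\mathbf{R}\mathbf{R}^{*}=\mathbf{P}$ is impossible. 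Hence you cannot conclude $\|\mathbf{R}\|_2=1$, and the submultiplicative route $\|\mathbf{L}\mathbf{X}\mathbf{R}\|_2\le\|\mathbf{R}\|_2^{2}\|\mathbf{X}\|_2$ leaves exactly the uncontrolled conditioning factor you yourself warned about.

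The paper's proof takes a different path that sidesteps this: under~\eqref{eqn:assumption} one has $\mathbf{L}\mathbf{X}\mathbf{R}=\mathbf{R}^{\top}\mathbf{P}\mathbf{X}\mathbf{R}$, which is \emph{complex symmetric} (equal to its own transpose, not Hermitian). For complex symmetric $\mathbf{M}$ there is a variational formula $\|\mathbf{M}\|_2=\max_{\|\mathbf{x}\|=1}\Re\bigl(\mathbf{x}^{\top}\mathbf{M}\,\mathbf{x}\bigr)$ (transpose, not conjugate transpose; see \cite{garcia2014}). Expanding $\mathbf{x}$ in the basis given by the columns of $\mathbf{L}$ and using $\mathbf{R}\mathbf{L}=\mathbf{I}$, $\mathbf{L}=\mathbf{R}^{\top}\mathbf{P}$ collapses the quadratic form to one involving only $\mathbf{P}\mathbf{X}$, which is then bounded entrywise by $\mathbf{1}^{\top}\mathbf{X}\,\mathbf{1}$. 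The complex-symmetric structure, not a norm bound on $\mathbf{R}$, is what makes the argument go through.
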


\begin{rem*}
Recall $\mathbf{B}$ is not normal and thus $\mathbf{R}, \mathbf{L}$ are not unitary.
Thus, some usual properties of norms do not apply.
For instance, it is not true that $\| \mathbf{L X R} \|_2 = \| \mathbf{X} \|_2$.
For this reason, the inequality in Equation \eqref{eqn:main-bound} needs to be carefully considered.
Indeed, the proof of Proposition \ref{pro:bound-x} relies heavily on the assumption that $\mathbf{L X R}$ is complex symmetric.
We direct readers interested in the theory of complex symmetric matrices to \cite{horn,garcia2014}.
\end{rem*}

\begin{proof}
The identity $\left\| \mathbf{X} \right\|_1 = \left( \sum_j a_{cj} \deg \left( j \right) \right)^2 - \sum_j a_{cj} \deg\left(j \right)^2$ is proved directly from Equation \eqref{eqn:def-x}. Thus, all we need to show is $\left\| \mathbf{L X R} \right\|_2 \leq \mathbf{1}^{\top} \mathbf{X} \mathbf{1}$.

%
%
%

Assuming Equation \eqref{eqn:assumption} holds, $\mathbf{L X R}$ is complex symmetric (but not Hermitian).
According to the theory of complex symmetric matrices, by Theorem 3.11 of \cite{garcia2014}, we have
\begin{equation}\label{eqn:singular}
\| \mathbf{L X R} \|_2 = \max_{\| \mathbf{x} \| = 1} \Re \left( \mathbf{x}^{\top} \mathbf{L X R} \mathbf{x} \right).
\end{equation}
Let $\{\mathbf{z}_i \}$ be the columns of $\mathbf{L}$.
The vectors $\mathbf{z}_i$ form a basis, though they need not be (left or right) eigenvectors.
Now let $\mathbf{x}$ be a vector with $\| \mathbf{x} \| = 1$ and write it in the chosen basis as $\mathbf{x} = \sum_i x_i \mathbf{z}_i$.
Under our assumptions, we have $\mathbf{I} = \mathbf{RL}$ and $\mathbf{L} = \mathbf{R^{\top} P}$ and thus 
\begin{equation}\label{eqn:real-part}
\Re \left( \mathbf{x}^{\top} \mathbf{L X R} \mathbf{x} \right) = \Re \sum_{i,j} x_i x_j \, \mathbf{z}_i^{\top} \mathbf{R^{\top} P X R} \mathbf{z}_j = \sum_{i,j} \Re \left( x_i x_j \right) (\mathbf{P X})_{ij} \leq \sum_{i,j} (\mathbf{P X})_{ij} = \sum_{i,j} \mathbf{X}_{ij} = \mathbf{1}^{\top} \mathbf{X 1},
\end{equation}
where the inequality is true since $\| \mathbf{x} \| = 1$ and we have used that $\mathbf{P}$ is a permutation matrix.
\end{proof}

\begin{rem*}
The number $\mathbf{1}^{\top} \mathbf{X} \mathbf{1}$ is what the authors of \cite{torres2020} called the \emph{X-degree centrality} of the newly added node $c$. In the cited paper it was argued that $\mathbf{1}^{\top} \mathbf{X} \mathbf{1}$ has a similar behavior to $\alpha_{11} = \mathbf{v}^{L}_1 \mathbf{X} \mathbf{v}^R_1$.
\end{rem*}

\begin{rem*}
The bounds explored in this work leave some room for improvement.
However, tighter bounds for $\left\| \mathbf{L X R} \right\|_p$, for any $p$, have proven to be elusive in the general case.
Nevertheless, it is our experimental observation that $y\left( t_c \right)$ is actually much closer to $\alpha_{11}$ than the present methods would lead one to believe.
In fact, in a computational setting the approximation $\epsilon_c \approx \alpha_{11} / \lambda_1^2$ has been used (cf. Equation \eqref{eqn:ineq-1}) with an empirical absolute error of up to $10^{-4}$. See Section 3.2 and Figure 3 in \cite{torres2020}.
Further progress in this area should focus on establishing this latter approximation in full rigor, as well as on finding tight bounds for $\alpha_{11}$.
\end{rem*}


\section{Conclusion}\label{sec:conclusion}
We have established a weak version of eigenvalue interlacing for the NB-matrix.
Indeed, after adding (or removing) the rows and columns incident to the same node $c$, the Perron eigenvalue behaves as expected: it can only increase when a new node is added to the graph, and it can only decrease when a node is removed from the graph.
However, the other eigenvalues do not seem to behave similarly.
It remains an open question if more general versions of interlacing apply
to the NB matrix.
Importantly, the assumption of diagonalizability is essential in the derivation of Theorems \ref{thm:interlacing} and \ref{thm:bound}.
We reiterate here that virtually all observed graphs in our experience have a diagonalizable NB matrix.
Furthermore, assuming that Equation \eqref{eqn:assumption} holds, we derive a bound in Propositoin \ref{pro:bound-x} that can be computed using only elementary information about the graph.
This latter assumption is tantamount to assuming that each eigenspace of the NB matrix corresponding to eigenvalues other than $\pm 1$ admits a basis that is orthogonal with respect to the inner product defined by the orientation reversal operator $\mathbf{P}$.
This property holds when all eigenvalues other than $\pm 1$ are simple.
Once again, virtually all graphs studied satisfy this property.
Still, a formal characterization of this property is a work in progress.

\section*{Acknowledgements}

This work started while the author was at Northeastern University’s Network Science Institute and supported in part by NSF IIS-1741197.
The author thanks Gabor Lippner and Tina Eliassi-Rad for many invaluable conversations.

\end{document}